\newtheorem{thm}{Theorem}[section]
\newtheorem{lem}[thm]{Lemma}
\theoremstyle{definition}
\theoremstyle{remark}
\newtheorem{rem}[thm]{Remark}
\def \S{\mathrm{S}}
\def \I{\mathrm{I}}
\def \C{\mathbb{C}}
\def\R{\mathbb{R}}
\def\2L{\Lambda_{\tilde{\gamma}}}
\def\1L{\Lambda_{\gamma}}
\renewcommand{\le}{\leqslant}
\renewcommand{\ge}{\geqslant}
\def \FF{\mathcal{F}}
\begin{document}

\title[A nonlinear Plancherel identity]{On Plancherel's identity for a two--dimensional scattering transform}

\thanks{
Mathematics Subject Classification. Primary 35P25, 45Q05; Secondary  42B37}
\thanks{Supported by Academy of Finland project  SA-12719831, by MINECO grants MTM2011-28198, MTM2013-41780-P and SEV-2011-0087 (Spain) and ERC grant 307179}
\author{Kari Astala}
\address{Le Studium, Loire Valley Institute for Advanced Studies, Orl\'eans \& Tours, France; Mapmo, University of Orl\'eans, rue de Chartres, 45100 Orl\'eans, France; Department of mathematics and statistics, Po.Box 68, 00014 University of Helsinki, Finland} \email{kari.astala@helsinki.fi}
\author{Daniel Faraco}
\address{Departamento de Matem\'aticas - Universidad Aut\'onoma de Madrid and Instituto de Ciencias Matem\'aticas CSIC-UAM-UC3M-UCM, 28049 Madrid, Spain} \email{daniel.faraco@uam.es}
\author{Keith M. Rogers}
\address{Instituto de Ciencias Matem\'aticas CSIC-UAM-UC3M-UCM, 28049 Madrid, Spain} \email{keith.rogers@icmat.es}

\maketitle

\begin{abstract} We consider the $\overline{\partial}$-Dirac system that Ablowitz and Fokas used to transform the defocussing Davey--Stewartson system to a linear evolution equation. The nonlinear Plancherel identity for the associated scattering transform was established by Beals and Coifman for Schwartz functions. Sung extended the validity of the identity to functions belonging to $L^1\cap L^\infty(\mathbb{R}^2)$ and Brown to $L^2$-functions with sufficiently small norm. More recently, Perry extended to the weighted Sobolev space $H^{1,1}(\mathbb{R}^2)$ and here we extend to  $H^{s,s}(\mathbb{R}^2)$ with $0<s<1$.
 \end{abstract}

\vspace{1em}

\section{Introduction}

Plancherel's identity for the Fourier transform $\, \widehat{\ }$ defined initially on Schwartz functions by
$$
\widehat{F}(\xi)=\frac{1}{(2\pi)^{n/2}}\int_{\R^n} F(x)\,e^{-i \xi\cdot x}dx,
$$
states that 
$$
\|\widehat{F}\|_2=\|F\|_2.
$$
Using the linearity of the transform, this also yields Lipschitz continuity
\begin{equation}\label{lc}
\|\widehat{F}-\widehat{G}\|_2\le \|F-G\|_2,
\end{equation}
and so the Fourier transform of Cauchy sequences are Cauchy sequences, which have limits in $L^2$. This allows us to define the transform for all $L^2$ functions and extend the identity to this larger class.

In this note we consider the two-dimensional nonlinear Fourier transform $\FF $ associated with the $\overline{\partial}$-Dirac system, first considered by Ablowitz and Fokas \cite{AF, FA}  (see the following section for the precise definition). Beals and Coifman \cite{BC0, BC1,BC} established Plancherel's identity
$$
\|\FF [F]\|_2=\|F\|_2
$$
for Schwarz functions $F$.
  Unlike in the linear case, the Lipschitz continuity is not an immediate consequence of the identity and so this is not enough to extend to~$L^2$. Sung treated $F\in L^1\cap L^\infty(\R^2)$ in \cite{S1}, then Brown~\cite{brown} proved Lipschitz continuity for $L^2$-functions with sufficiently small norm, allowing him to extend the definition of $\FF $ and the validity of the Plancherel identity to these functions. In~\cite{perry}, Perry  proved a local version of Lipschitz continuity for functions in the class $H^{1,1}$, where
$$
\|F\|_{H^{s,s}}=\max\{\|F\|_{L^2(\R^2)}, \|D^sF\|_{L^2(\R^2)},\||\cdot|^sF\|_{L^2(\R^2)}\},
$$
and $\widehat{D^sF}=|\cdot|^s \widehat{F}$; in Perry's case $D^1$ can of course be replaced by the gradient. He then applied his result to the Davey--Stewartson system, as did the previously mentioned authors, proving global well-posedness in $H^{1,1}$ for the defocussing system. Indeed, the main motivation for studying this scattering problem (there are many other scattering models of course, see for example \cite{CK}, \cite{MTT}) is that it solves the defocussing Davey--Stewartson system in the same way as the linear nonelliptic Schr\"odinger equation can be solved using the Fourier transform. This was first observed by Ablowitz and Fokas \cite{AF, FA}. That there can be blow--up in the focussing Davey--Stewartson system is due to Ozawa \cite{O}.

Here we refine part of Perry's argument, proving a substitute for  Lipschitz continuity for the functions in $H^{s,s}$ with $s\in(0,1)$, allowing us to extend the validity of the Plancherel identity to this space.

 In the following section we will recall the definition of the scattering transform and give a self-contained proof of the Plancherel identity for Schwartz functions. As in the case of the linear Fourier transform, there are slightly different conventions regarding the definition which consist of little more than changes in parameters -- we use the same normalisation as Sung~\cite{S1}. In the third section, we will prove the key technical lemma and recall how the arguments of Brown and Perry achieve the local Lipschitz continuity.

\section{The Plancherel identity for Schwartz functions}
In this section we take $q$ in  the Schwartz class and start by defining the scattering solutions $u$. We then define the   scattering transform $\mathcal{F}[q]$, and prove the Plancherel identity for Schwartz functions.

As $e^{i\overline{k}z/2}$ is a holomorphic function of $z\in \mathbb{C}$, the Cauchy--Riemann equations tell us that $\partial_{\overline{z}} e^{i\overline{k}z/2}=0$, where $\partial_{\overline{z}}=\frac{1}{2}(\frac{d}{dz_1}+i\frac{d}{dz_2})$. In order to find similar vector-valued solutions to the system
\begin{equation}\label{scat0}
\partial_{\overline{z}}\psi=Q\overline{\psi},\qquad Q=\left( \begin{array}{cc}
0 & q  \\
q & 0
 \end{array} \right),
\end{equation}
we require that $\lim_{|z|\to\infty}\psi(z,k)e^{-i\overline{k}z/2}\to (1,0)$. Writing $\psi=e^{i\overline{k}z/2}U$ and $e_k(z)=e^{i(\overline{k}z+k\overline{z})/2}=e^{ik\cdot z}$, this is equivalent to solving the system
\begin{align}\label{scat}
\partial_{\overline{z}}u_{1}&=e_{-k}q\overline{u_{2}}\\\nonumber
\partial_{\overline{z}}u_{2}&=e_{-k}q\overline{u_{1}},
\end{align}
where $\lim_{|z|\to\infty}u_1(z,k)\to 1$ and $\lim_{|z|\to\infty}u_2(z,k)\to 0$. 

We first recall why the solutions to this system are unique. By writing $v_1=u_1+u_2$ and $v_2=u_1-u_2$, we can add and subtract  to obtain the equivalent pair of equations
\begin{eqnarray}
\label{yksi} \partial_{\overline{z}}v_1&=e_{-k}q\overline{v_1}\\
\label{kaksi} \partial_{\overline{z}}v_2&=-e_{-k}q\overline{v_2}.
\end{eqnarray}
By a Liouville-type theorem (see for example \cite[Theorem 8.5.1]{AIM}),  bounded solutions to \eqref{yksi} or \eqref{kaksi}
have the form
\begin{equation}\label{kolme}
v(z) = C e^{\phi(z)}, \qquad \phi \in C_0(\C).
\end{equation}
In particular, by the linearity of the equations,  bounded solutions $v_1,v_2$ with the same given limit at infinity are unique. 
By adding and subtracting we see that $u_1,u_2$ are also unique.

For existence we appeal to Fredholm theory. By defining the operator 
\begin{equation*}\label{operator}
\S^k_q [F]=\partial_{\overline{z}}^{-1}\Big[e_{-k}
q\,\partial_{z}^{-1}\big[e_{k}\overline{q}F\big]\Big],
\end{equation*}
we see from \eqref{scat} that
$(\I-\S^{k}_{q})u_{1}=1$ and $(\I-\S^{k}_{q})u_{2}=\partial_{\overline{z}}^{-1}[e_{-k}q]$ and so it will suffice to invert $(\I-\S^{k}_{q})$ to obtain solutions. To see this, we require a number of well-known properties of the Cauchy transform. For example, by \cite[Theorems 4.3.11]{AIM}, we have the uniform bound
$$
\|\S^{k}_{q}[F]\|_\infty\le C_p\|q\|_p\|q\|_{p'}\|F\|_\infty\le C_s\|q\|_{H^{s,s}}^2\|F\|_\infty,\quad s>1-\tfrac{2}{p}>0.
$$
Equicontinuity can be deduced from \cite[Theorem 4.3.13]{AIM}, so by the Arzel\`a--Ascoli theorem $\S^{k}_{q}$ is a compact operator on $L^\infty(\R^2)$. In order to prove that $(\I-\S^{k}_{q})$ is injective we suppose that $(\I-\S^{k}_{q})[F]=F$ which is equivalent to
$$
F=\partial_{\overline{z}}^{-1}[e_{-k}q\overline{G}]\quad \text{where}\quad G=\partial_{\overline{z}}^{-1}[e_{-k}q\overline{F}].
$$
By adding and subtracting to obtain an equivalent system, we can argue as in \eqref{yksi}-\eqref{kolme} to see that $F$ is zero and so $(\I-\S^{k}_{q})$ is injective. 
Thus, by the Fredholm alternative, $\I-\S^k_{q}$ can be inverted so that $$u_{1}(z,k)=(\I-\S^k_{q})^{-1}[1](z),\qquad u_{2}=(\I-\S^k_{q})^{-1}\big[\partial_{\overline{z}}^{-1}[e_{-k}q]\big](z),$$ 
and we have obtained our scattering solutions.

We now make the simplifying assumption that $q$ has compact support, although we will soon see that this is unnecessary using arguments due to Sung \cite{S1}.  From \eqref{scat} we see that $u_1$ and $u_2$ are  holomorphic near infinity and so they have asymptotics given by their Laurent series;
\begin{align*}
u_1(z,k)&=1+a(k)z^{-1}+\sum_{j\ge2}a_j(k)z^{-j}\\
u_2(z,k)&=b(k)z^{-1}+\sum_{j\ge2}b_j(k)z^{-j},
\end{align*}
for $z$ outside a disk containing $\text{supp}(q)$.
The development combined with \eqref{scat} and the Cauchy formula allows us to define the scattering transform  by 
\begin{equation}\label{kuusi}
\mathcal{F}[q](k):=\tfrac{i}{2} b(k)=\frac{i}{2\pi}\int_{\R^2} e_{-k}(z)\, q(z)\, \overline{u_1(z,k)}\, dz.
\end{equation}
With this quantity we will be able to invert the process.
 
To see this, we consider 
$
(\partial_k\psi_{1}, \partial_{\overline{k}}\psi_{2})$, as functions of~$z$,
which also solve~\eqref{scat0}. Note that the pair can be written as $$e^{i\overline{k}z/2}(\partial_k u_1,\frac{iz}{2}u_2+\partial_{\overline{k}}u_2),$$ and  by differentiating the asymptotic series above, we see that \begin{equation}\label{asym}\lim_{|z|\to\infty}\big(\partial_k u_1,\frac{iz}{2}u_2+\partial_{\overline{k}}u_2\big)=(0,\mathcal{F}[q]).\end{equation} Therefore, by uniqueness of solutions to \eqref{scat0}, we have that 
\begin{align}\label{theform}
\partial_{k}\psi_1&=\overline{\mathcal{F}[q]}\psi_2\\\nonumber
\partial_{\overline{k}}\psi_2&=\mathcal{F}[q]\psi_1.\end{align}

This system takes a form similar to \eqref{scat0} in terms of the $\partial_k$-derivative. Indeed, writing $\phi_1=\psi_1$, $\phi_2=
\overline{\psi_{2}}$ and taking the complex conjugate of the second equation, it is equivalent to
\begin{equation}\label{scat2}
\partial_{k}\phi=\overline{\mathcal{F}[Q]} \, \overline{\, \phi \,},\qquad \overline{\mathcal{F}[Q]}=\left( \begin{array}{cc}
0 & \overline{\mathcal{F}[q]}  \\
\overline{\mathcal{F}[q]} & 0
 \end{array} \right).
\end{equation}
Note that the first coordinate $\phi_1=\psi_1$ is the same for both equations \eqref{scat0} and \eqref{scat2}.

By formally repeating the argument we return to the original form; 
\begin{equation}\label{scat23}
\partial_{\overline{z}}\psi=\overline{\mathcal{F}\circ\overline{\mathcal{F}[Q]}}\, \overline{\psi},\qquad \overline{\mathcal{F}\circ\overline{\mathcal{F}[Q]}}=\left( \begin{array}{cc}
0 & \overline{\mathcal{F}\circ\overline{\mathcal{F}[q]}}  \\
\overline{\mathcal{F}\circ\overline{\mathcal{F}[q]}} & 0
 \end{array} \right).
\end{equation}
This yields the inversion formula since, arguing as in \eqref{yksi}-\eqref{kolme} the function $\psi_1 + \psi_2$ is  nonvanishing, and 
$$
q=\frac{\partial_{\overline{z}}(\psi_1 + \psi_{2})}{\overline{\psi_1} + \overline{\psi_{2}}}=\overline{\mathcal{F}\circ\overline{\mathcal{F}[q]}}.
$$

We remark that this formal repetition of the argument is not yet rigorous  as we have not analysed the $k$-asymptotics -- we cannot use Laurent series as $\mathcal{F}[q]$ does not have compact support as a function of $k$. 
However we now   avoid  the use of Laurent series, by differentiating 
 \begin{align*}
 u_{1}&=1+\partial_{\overline{z}}^{-1}[e_{-k}q\overline{u_{2}}],\\
  u_{2}\nonumber
&=\partial_{\overline{z}}^{-1}[e_{-k}q\overline{u_{1}}],
\end{align*}
 equivalent to \eqref{scat}, and using their first order asymptotics and uniqueness properties to prove \eqref{theform} as in \cite{S1}. Indeed, by writing $$e_k(z)u_{2}(z)= \partial_{\overline{z}}^{-1}[e_{-k}(\cdot-z)q\overline{u_{1}}]$$ and differentiating, we obtain
 \begin{align}\label{derivatives}
\partial_ku_{1}&=\partial_{\overline{z}}^{-1}\big[e_{-k}q\overline{e_{-k}\partial_{\overline{k}}[e_ku_{2}]}\big]\\
e_{-k}\partial_{\overline{k}}[e_ku_{2}]&=\mathcal{F}[q]+\partial_{\overline{z}}^{-1}[e_{-k}q\overline{\partial_ku_{1}}],
\end{align}
where, in the noncompactly supported case,  $\mathcal{F}[q]$ is defined directly via the integral in~\eqref{kuusi}. Differentiating under the integral can be justified using  \cite[Lemma A.5]{S1}. By differentiating more directly, we also have that $e_{-k}\partial_{\overline{k}}[e_ku_{2}]=\frac{iz}{2}u_2+\partial_{\overline{k}}u_2$ which yields \eqref{asym}. Then by uniqueness of solutions to \eqref{scat0} as before, we obtain \eqref{theform} and~\eqref{scat2}.

In order to repeat the argument in \eqref{scat2} in the $k$ variable, with $q$ replaced by $\mathcal{F}[q]$, we need to show that $\lim_{|k|\to\infty}\phi(z,k)e^{-i\overline{k}z/2}\to (1,0)$. This will follow from estimates in the next section  
  (see the forthcoming Remark~\ref{rem}). That $\mathcal{F}[q] \in L^1 \cap L^{\infty}$ 
 (needed to use \cite[Theorem 8.5.1]{AIM} and to be sure that the integral in the definition of $\mathcal{F}\circ\overline{\mathcal{F}[Q]}$ is well-defined), follows by repeated integration by parts in \eqref{kuusi}, where we write $e_{-k} = 2i \partial_{z} (e_{-k}/\overline{k})$ and use \eqref{scat} and \eqref{kolme} at each iteration, generating as much decay in $|k|$ as is needed.

Plancherel's identity then follows as usual using Fubini's theorem and the inversion formula;
\begin{align*}
\|\mathcal{F}[q]\|_2^2&=\int_{\R^2} \mathcal{F}[q](k)\, \overline{\mathcal{F}[q]}(k)\, dk\\
&=\int_{\R^2} \frac{i}{2\pi} \int_{\R^2} e_{-k}(z)\,q(z)\,  \overline{u_{1}(z,k)}\, dz\, \overline{\mathcal{F}[q]}(k)\, dk\\
&=\int_{\R^2} q(z)\,\mathcal{F}\circ\overline{\mathcal{F}[q]}(z)\, dz\\
&=\int_{\R^2} q(z)\,\overline{q(z)}\, dz=\|q\|_2^2.
\end{align*}
Here we use that the factor $\overline{u_{1}(z,k)}$ is the same when defined with respect to $q$ or $\overline{\mathcal{F}[q]}$.

In order to extend the identity to potentials that are not in the Schwartz class, it remains to prove a substitute for the Lipschitz continuity~\eqref{lc}.

\section{Local Lipschitz continuity}

From the previous section we know that the scattering transform of Schwartz potentials could be \lq measured' in order to recover the potential, via the inversion formula. Then a substitute for the inequality
$$
\|F-G\|_2=\|(\widehat{F})^\vee-(\widehat{G})^\vee\|_2\le \|\widehat{F}-\widehat{G}\|_2
$$
would also tell us that this recovery process  is in some sense stable; two similar scattering transforms must have been produced by two similar potentials.

We will require the following lemma.

\begin{lem}\label{isitgood} Let $s\in[0,1]$ and $p\in(2,\infty)$. Then
$$
\|\S^k_q [F]\|_{L^p}\le C(1+|k|)^{-s}\|q\|^2_{H^{s,s}}\|F\|_{L^{p}}
$$
and
$$
\|\S^k_q [F]\|_{L^p}\le C(1+|k|)^{-s}\|q\|^2_{H^{s,s}}\|F\|_{L^{p+s}}.
$$
Moreover, if $p\in (2/s,\infty)$, then
$$
\|\S^k_q [F]\|_{L^p}\le C(1+|k|)^{-s}\|q\|^2_{H^{s,s}}\|F\|_\infty.
$$
\end{lem}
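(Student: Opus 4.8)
## Proof Strategy

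The operator to estimate is
$$
\S^k_q[F]=\partial_{\overline{z}}^{-1}\Big[e_{-k}\,q\,\partial_{z}^{-1}\big[e_{k}\overline{q}\,F\big]\Big],
$$
a composition of two Cauchy transforms with the oscillatory modulations $e_{\pm k}$ sandwiching the potentials $q,\overline q$. The whole point is to extract the decay factor $(1+|k|)^{-s}$, and the obvious source of that decay is oscillation: the kernel $e_{k}(z)=e^{ik\cdot z}$ oscillates rapidly when $|k|$ is large, so the inner operator $\partial_{z}^{-1}[e_{k}\,\overline{q}\,(\cdot)]$ ought to gain smallness. The plan is therefore to understand a single modulated Cauchy transform $T_k[G]=\partial_{z}^{-1}[e_{k}G]$ (and its $\partial_{\overline z}^{-1}$ analogue) as an operator whose norm decays in $|k|$ whenever $G$ carries some regularity, then chain two such estimates together, putting the two factors of $q$ to work as the regularity-and-decay-carrying objects.

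\medskip

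First I would isolate the key one-parameter estimate. Writing $\partial_z^{-1}$ as convolution against a Cauchy kernel, $T_k[G](z)=\tfrac{1}{\pi}\int \frac{e_{k}(w)G(w)}{z-w}\,dw$, one sees on the Fourier side that conjugating by $e_k$ is a frequency shift, so $T_k$ is a Fourier multiplier of the form $(\xi+k)^{-1}$-type (up to constants). The relevant fact, essentially interpolated from the $s=0$ and $s=1$ endpoints, is a bound of the shape
$$
\big\|\partial_{z}^{-1}[e_{k}\,\overline{q}\,F]\big\|_{L^r}\le C(1+|k|)^{-s}\|q\|_{H^{s,s}}\|F\|_{L^{p}},
$$
for a suitable intermediate exponent $r$, valid because $\overline q\in H^{s,s}$ supplies both the $s$ derivatives needed to beat the oscillation against the shifted multiplier and the $|\cdot|^s$-weight decay needed to control the low-frequency singularity of $(\xi+k)^{-1}$ at $\xi=-k$. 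The endpoint $s=0$ is just the fractional integration (Hardy--Littlewood--Sobolev) bound for $\partial_z^{-1}$ together with Hölder, and $s=1$ is Perry's gradient case; the $(1+|k|)^{-s}$ rate is obtained by real interpolation (or directly by splitting into $|\xi|\lesssim|k|$ and $|\xi|\gtrsim|k|$ regions and trading weight decay against the multiplier size). I would state this as a self-contained sublemma and reduce everything to it.

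\medskip

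Given the sublemma, the three inequalities follow by choosing exponents in the composition. For the first bound, apply the sublemma to the inner operator with $F\in L^{p}$ landing in some $L^{r}$ (paying one factor $(1+|k|)^{-s/2}\|q\|_{H^{s,s}}$ via the first $q$), then apply the corresponding $\partial_{\overline z}^{-1}$-estimate with the second $q$ to return from $L^{r}$ to $L^{p}$ (paying the second factor), so that the decay rates $(1+|k|)^{-s/2}$ multiply to give $(1+|k|)^{-s}$ and the two norms of $q$ combine into $\|q\|^2_{H^{s,s}}$; here the Cauchy transforms are chosen so that the Sobolev embedding exponents close up on $L^p\to L^p$. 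The second inequality is the same scheme but arranged so that the inner input sits in the slightly larger space $L^{p+s}$, which one affords by shifting how much of the Hardy--Littlewood--Sobolev gain is spent on raising the integrability versus extracting $k$-decay. For the third inequality, $F\in L^\infty$ is treated by running the composition with the $L^\infty$ endpoint of the Cauchy transform; the restriction $p>2/s$ is exactly the threshold at which the two fractional integrations, each gaining like two dimensions' worth of integrability, can carry an $L^\infty$ input down to a finite $L^p$ while still producing $s$ orders of decay, matching the Arzelà--Ascoli-style $L^\infty$ bound already used in the existence argument.

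\medskip

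The main obstacle I anticipate is genuinely obtaining the $(1+|k|)^{-s}$ decay rate rather than merely boundedness, and doing so uniformly at the fractional level $0<s<1$. Boundedness of the Cauchy transform between Lebesgue spaces is classical and $k$-independent; the decay must come entirely from the interaction of the oscillation $e_k$ with the limited smoothness of $q$, and at fractional regularity one cannot simply integrate by parts $s$ times. I would handle this by a Littlewood--Paley/dyadic decomposition of $q$, estimating the contribution of frequencies $|\xi|\sim 2^j$ against the shifted singularity of the multiplier at $-k$: low frequencies of $q$ (where $|\xi|\ll|k|$) see a nonsingular, rapidly controlled multiplier and contribute $|k|^{-s}$ after summing the $H^{s,s}$ norm, while the troublesome frequencies $|\xi|\sim|k|$ are precisely where the weight $\||\cdot|^s q\|_2$ supplies the compensating decay. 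Balancing these two regimes so that both sum to $(1+|k|)^{-s}\|q\|^2_{H^{s,s}}$, and verifying that the exponent arithmetic in the composition leaves no room lost, is the delicate heart of the argument; everything else is Hölder and Hardy--Littlewood--Sobolev bookkeeping.
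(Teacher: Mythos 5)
Your proposal rests on a key sublemma that is false, and the chaining strategy built on it cannot be repaired in the form you describe. You propose to extract the decay (all of it, or a share $(1+|k|)^{-s/2}$) from the \emph{inner} modulated Cauchy transform, via a bound of the shape $\|\partial_{z}^{-1}[e_{k}\overline{q}F]\|_{L^r}\le C(1+|k|)^{-s}\|q\|_{H^{s,s}}\|F\|_{L^{p}}$ for arbitrary $F\in L^p$. But $F$ carries no smoothness and can cancel the oscillation outright: take $F=e_{-k}H$ with a fixed bump $H$, so that $e_k\overline{q}F=\overline{q}H$ and hence $\partial_{z}^{-1}[e_{k}\overline{q}F]=\partial_{z}^{-1}[\overline{q}H]$, a fixed nonzero function, while $\|F\|_{L^p}=\|H\|_{L^p}$ is constant in $k$. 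Letting $|k|\to\infty$ contradicts the claimed bound. The underlying point is that the smoothness (or Littlewood--Paley localization) of $q$ does not survive multiplication by an arbitrary Lebesgue function: the Fourier support of $\overline{q}F$ is spread everywhere, so no amount of regularity of $q$ keeps the product away from the singular frequency of the shifted multiplier. The same objection applies to any estimate promising $k$-decay for a modulated Cauchy transform acting on a rough input, so splitting the decay ``$s/2$ from each factor'' is structurally impossible; the distribution is all-or-nothing.

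The paper's proof respects exactly this asymmetry: \emph{all} of the decay $(1+|k|)^{-s}$ is extracted from the \emph{outer} transform, and the inner transform is used only for its smoothing. One interpolates between the Hardy--Littlewood--Sobolev bound $\|\partial_{\overline{z}}^{-1}[e_{-k}G]\|_{p}\le C\|G\|_{2p/(p+2)}$ and Perry's bound $\|\partial_{\overline{z}}^{-1}[e_{-k}G]\|_{p}\le C|k|^{-1}\|DG\|_{2p/(p+2)}$ (proved via the multiplier identity that writes the shifted symbol as $|k|^{-1}$ times a bounded combination of the identity and a Riesz-type quotient), obtaining $(1+|k|)^{-s}$ at the price of $D^sG$ applied to $G=q\,\partial_{z}^{-1}[e_{k}\overline{q}F]$. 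This particular $G$ genuinely has $s$ derivatives: by the fractional Leibniz rule, $D^sG$ is controlled by $D^sq$ (the smoothness half of the $H^{s,s}$ norm) together with $D^{s}\partial_{z}^{-1}$, an operator of order $s-1$ handled by Riesz transforms plus Hardy--Littlewood--Sobolev --- this is where the smoothing of the inner Cauchy transform is spent, and the modulation $e_k$ inside it is carried along harmlessly since all these bounds are modulation invariant. Also, the weighted half $\||\cdot|^s q\|_{2}$ of the norm is not played against frequencies $|\xi|\sim|k|$ as you suggest; it enters only to prove the purely Lebesgue estimate $\|q\|_{u}\le C\|q\|_{H^{s,s}}$ for exponents $u<2$ (where Sobolev embedding cannot help), by H\"older's inequality against the weight $(1+|z|^2)^{\alpha}$. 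If you want to salvage your outline, state the $k$-decay estimate only for the outer operator acting on inputs measured in $\|G\|+\|D^sG\|$, and then prove that the inner output has that fractional smoothness; this is precisely the reduction carried out in the paper.
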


\begin{proof} It will suffice to prove that if $s>2(\frac{1}{p}-\frac{1}{r})$, then
$$
\|\S^k_q [F]\|_{L^p}\le C(1+|k|)^{-s}\|q\|^2_{H^{s,s}}\|F\|_{L^{r}}.
$$
To deal with the first part of the operator,  we will require the estimate
\begin{equation}\label{isitgood0}
\big\|\partial_{\overline{z}}^{-1}\big[e_{-k}G\big]\big\|_{p}\le C(1+|k|)^{-s}\Big(\|G\|_{\frac{2p}{p+2}}+\|D^sG\|_{\frac{2p}{p+2}}\Big),
\end{equation}
which follows easily by complex interpolation between 
$$
\big\|\partial_{\overline{z}}^{-1}\big[e_{-k}G\big]\big\|_{p}\le C\|G\|_\frac{2p}{p+2},
$$
which is a consequence of the Hardy--Littlewood--Sobolev inequality (see for example \cite[Theorem 1.2.16 or Theorem 6.1.3]{G}) as the kernel of $\partial_{\overline{z}}^{-1}$ is $\frac{1}{\pi z}$, 
and
\begin{equation}\label{per}
\big\|\partial_{\overline{z}}^{-1}\big[e_{-k}G\big]\big\|_{p}\le C|k|^{-1}\|D G\|_\frac{2p}{p+2}.
\end{equation}
For this second inequality, we note that the Fourier multiplier associated to $G\mapsto e_{k}\partial_{\overline{z}}^{-1}\big[e_{-k}G\big]$  can be written as
$$
\frac{2}{i(\xi_1-k_1)-(\xi_2-k_2)}=\frac{2}{ik_1-k_2}\Big(\frac{i\xi_1-\xi_2}{i(\xi_1-k_1)-(\xi_2-k_2)}-1\Big),
$$
so that
$$
\big|\partial_{\overline{z}}^{-1}\big[e_{-k}G\big]\big|\le \frac{2}{|k|}\Big(\big|\partial_{\overline{z}}^{-1}\big[e_{-k}\partial_{\overline{z}} G\big]\big|+|G|\Big),
$$
and then we apply the Hardy--Littlewood--Sobolev inequality again. Note that $$\|\partial_{\overline{z}} G\|_\frac{2p}{p+2}\le C\|DG\|_\frac{2p}{p+2}. $$
See formula (2.8) of \cite{perry} for a proof of \eqref{per} by integration by parts.

Now after applying \eqref{isitgood0}, we are required to prove
$$
\Big\|D^s\Big[
q\,\partial_{z}^{-1}\big[e_{k}\overline{q}F\big]\Big]\Big\|_{\frac{2p}{p+2}}\le C\|q\|^2_{H^{s,s}}\|F\|_{L^{r}}
$$
and 
$$
\big\|
q\,\partial_{z}^{-1}\big[e_{k}\overline{q}F\big]\big\|_{\frac{2p}{p+2}}\le C\|q\|^2_{H^{s,s}}\|F\|_{L^{r}}.
$$
By the fractional Leibnitz rule (see for example \cite{B} or the appendix of \cite{KPV}) for the first, and H\"older's inequality for the second, these estimates would follow from
$$
\big\|\partial_{z}^{-1}\big[e_{k}\overline{q}F\big]\big\|_p\le C\|q\|_{H^{s,s}}\|F\|_{L^r}
$$
and 
$$
\big\|D^{s}\partial_{z}^{-1}\big[e_{k}\overline{q}F\big]\big\|_{p}\le C\|q\|_{H^{s,s}}\|F\|_{L^r}.
$$
These inequalities would in turn follow from
\begin{equation}\label{cuns}
\|qF\|_t\le C\|q\|_{H^{s,s}}\|F\|_{L^r}
\end{equation}
with $\frac{1}{2}=\frac{1}{t}-\frac{1}{p}$ and with $\frac{1-s}{2}=\frac{1}{t}-\frac{1}{p}$. To see that the second inequality follows from \eqref{cuns}, we note that the Fourier multiplier associated to $D^s\partial_{z}^{-1}$ can be written as
$$
\frac{|\xi|^{s}}{i\xi_1+\xi_2}=\frac{|\xi|^s(\xi_2-i\xi_1)}{|\xi|^2}=\frac{\xi_2-i\xi_1}{|\xi|}\frac{1}{|\xi|^{1-s}}
$$
so that the expected bounds for $D^s\partial_{z}^{-1}$ hold by the $L^{p}$-boundedness of the Riesz transforms followed by the Hardy--Littlewood--Sobolev inequality.

Now, by H\"older's inequality, \eqref{cuns} would  be a consequence of
\begin{equation}\label{abov}
\|q\|_{t(r/t)'}\le C\|q\|_{H^{s,s}}.
\end{equation}
Note that
$$
t(r/t)'=\frac{1}{\frac{1}{t}-\frac{1}{r}}\le\frac{2}{1-s}
$$
so that if $t(r/t)'\ge 2$ we can employ the Hardy--Littlewood--Sobolev inequality again to get the result. On the other hand, if $t(r/t)'< 2$ we will use H\"older's inequality to prove the estimate and the worst case is when $\frac{1}{2}=\frac{1}{t}-\frac{1}{p}$. Indeed, by writing 
$$
\|q\|^{t(r/t)'}_{t(r/t)'}=\int |q(z)|^{t(r/t)'}\frac{(1+|z|^2)^\alpha}{(1+|z|^2)^\alpha}\,dz
$$
and applying H\"older's inequality we obtain \eqref{abov} whenever
$$
\Big(\frac{2}{t(r/t)'}\Big)'\alpha>1, 
$$
where $\alpha=s\frac{t(r/t)'}{2}$. This is equivalent to the condition $s+1>2(\frac{1}{t}-\frac{1}{r})$, which is true as long as $s>2(\frac1p-\frac1r)$. 
\end{proof}

One can also obtain $L^\infty$-estimates that take the following form.

\begin{lem}\label{isitgood2} Let $p>2$. Then
$$
\|\S^k_q [F]\|_{L^\infty}\le C|k|^{-1}\big(\|q\|_{W^{1,p}}+\|q\|_{W^{1,p'}}\big)^2\|F\|_{L^{\infty}}.
$$
\end{lem}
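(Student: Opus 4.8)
The plan is to mirror the proof of Lemma~\ref{isitgood}, using the oscillation of $e_{-k}$ to extract one factor of $|k|^{-1}$, but now landing in $L^\infty$ rather than $L^p$. Writing $H=q\,\partial_z^{-1}[e_k\overline{q}F]$ so that $\S^k_q[F]=\partial_{\overline{z}}^{-1}[e_{-k}H]$, I would invoke the pointwise inequality already established in the proof of Lemma~\ref{isitgood},
\[
\big|\partial_{\overline{z}}^{-1}[e_{-k}H]\big|\le \frac{2}{|k|}\Big(\big|\partial_{\overline{z}}^{-1}[e_{-k}\partial_{\overline{z}} H]\big|+|H|\Big),
\]
and take the supremum in $z$ to reduce the lemma to the two bounds
\[
\|H\|_\infty+\big\|\partial_{\overline{z}}^{-1}[e_{-k}\partial_{\overline{z}} H]\big\|_\infty\le C\big(\|q\|_{W^{1,p}}+\|q\|_{W^{1,p'}}\big)^2\|F\|_\infty.
\]
The two analytic inputs are, first, that for $p>2$ the Cauchy transform maps $L^p\cap L^{p'}$ into $L^\infty$, i.e. $\|\partial_{\overline{z}}^{-1}G\|_\infty\le C(\|G\|_p+\|G\|_{p'})$ (and likewise for $\partial_z^{-1}$), obtained by splitting the kernel $\tfrac{1}{\pi z}$ into its restrictions to $\{|z|<1\}$ and $\{|z|\ge 1\}$ and applying H\"older, since $|z|^{-1}$ lies in $L^{p'}$ near the origin and in $L^{p}$ at infinity precisely when $p>2$; and second, the Sobolev embedding $W^{1,p}(\R^2)\hookrightarrow L^\infty$ for $p>2$, giving $\|q\|_\infty\le C\|q\|_{W^{1,p}}$.

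With these in hand, the term $\|H\|_\infty$ is immediate: $\|H\|_\infty\le \|q\|_\infty\,\|\partial_z^{-1}[e_k\overline{q}F]\|_\infty$, and the Cauchy-to-$L^\infty$ bound gives $\|\partial_z^{-1}[e_k\overline{q}F]\|_\infty\le C(\|\overline{q}F\|_p+\|\overline{q}F\|_{p'})\le C(\|q\|_p+\|q\|_{p'})\|F\|_\infty$, so that $\|H\|_\infty\le C(\|q\|_{W^{1,p}}+\|q\|_{W^{1,p'}})^2\|F\|_\infty$.

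For the remaining term I would again bound through the Cauchy-to-$L^\infty$ mapping, $\|\partial_{\overline{z}}^{-1}[e_{-k}\partial_{\overline{z}} H]\|_\infty\le C(\|\partial_{\overline{z}} H\|_p+\|\partial_{\overline{z}} H\|_{p'})$, discarding the harmless unimodular factor $e_{-k}$, and then differentiate by the product rule:
\[
\partial_{\overline{z}} H=(\partial_{\overline{z}} q)\,\partial_z^{-1}[e_k\overline{q}F]+q\,(\partial_{\overline{z}}\partial_z^{-1})[e_k\overline{q}F].
\]
In the first term I would pull out $\|\partial_z^{-1}[e_k\overline{q}F]\|_\infty$ as above and keep $\|\partial_{\overline{z}} q\|_p$ (resp.\ $\|\partial_{\overline{z}} q\|_{p'}$); in the second I would use $\|q\|_\infty\le C\|q\|_{W^{1,p}}$ together with the $L^p$- (resp.\ $L^{p'}$-) boundedness of the Beurling-type operator $\partial_{\overline{z}}\partial_z^{-1}$, so that $\|(\partial_{\overline{z}}\partial_z^{-1})[e_k\overline{q}F]\|_p\le C\|\overline{q}F\|_p\le C\|q\|_p\|F\|_\infty$. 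Summing over the two exponents and collecting the $q$-factors again produces $C(\|q\|_{W^{1,p}}+\|q\|_{W^{1,p'}})^2\|F\|_\infty$.

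I expect the main obstacle to be largely bookkeeping: ensuring that every product of two $q$-factors is controlled by one $W^{1,p}$ and one $W^{1,p'}$ norm, so that the mixed $L^p/L^{p'}$ exponents always pair up correctly after H\"older, and checking that the constants in the Cauchy-to-$L^\infty$ and Beurling estimates are uniform in $k$. The only genuine analytic ingredients---the $L^p\cap L^{p'}\to L^\infty$ mapping of the Cauchy transform for $p>2$, the embedding $W^{1,p}\hookrightarrow L^\infty$, and the Calder\'on--Zygmund boundedness of $\partial_{\overline{z}}\partial_z^{-1}$---are all standard, and the single gain $|k|^{-1}$ comes entirely from the multiplier identity already used for \eqref{per}.
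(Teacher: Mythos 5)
Your proposal is correct and follows essentially the same route as the paper: the same pointwise multiplier inequality to gain the factor $|k|^{-1}$, the Cauchy-transform bound $\|\partial_{\overline z}^{-1}G\|_\infty\le C(\|G\|_p+\|G\|_{p'})$ (which is the cited \cite[Theorem 4.3.11]{AIM}, and which you re-derive by the standard kernel splitting), the same product-rule decomposition of $\partial_{\overline z}\big(q\,\partial_z^{-1}[e_k\overline q F]\big)$, and the boundedness of the Beurling-type operator $\partial_{\overline z}\partial_z^{-1}$ for the second term. The only cosmetic difference is that you invoke the Sobolev embedding $W^{1,p}(\R^2)\hookrightarrow L^\infty$ explicitly to control $\|q\|_\infty$, a step the paper leaves implicit.
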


\begin{proof} In the previous proof, we saw that
$$\big|\partial_{\overline{z}}^{-1}\big[e_{-k}G\big]\big|\le \frac{2}{|k|}\Big(\big|\partial_{\overline{z}}^{-1}\big[e_{-k}\partial_{\overline{z}} G\big]\big|+|G|\Big),
$$
so by \cite[Theorems 4.3.11]{AIM}, 
\begin{equation}\label{fr}\big\|\partial_{\overline{z}}^{-1}\big[e_{-k}G\big]\big\|_\infty\le \frac{2}{|k|}\Big(\|\partial_{\overline{z}} G\|_{p}+\|\partial_{\overline{z}} G\|_{p'}+\|G\|_\infty\Big).
\end{equation}
Taking $G=q\,\partial_{z}^{-1}\big[e_{k}\overline{q}F\big]$, we see that
$$\|\partial_{\overline{z}}G\|_p\le \|\partial_{\overline{z}}q\|_p\|\partial_{z}^{-1}\big[e_{k}\overline{q}F\big]\|_\infty+\|q\|_\infty\|\partial_{\overline{z}}\partial_{z}^{-1}\big[e_{k}\overline{q}F\big]\|_p,
$$
so by a further application of \cite[Theorems 4.3.11]{AIM}, the Hardy--Littlewood--Sobolev inequality, and the boundedness of the Beurling transform, we can deal with the first term on the right hand side of \eqref{fr}. The second and third terms are dealt with in a similar fashion.
\end{proof}

\begin{rem}\label{rem} 
The previous lemma yields the asymptotics in the $k$ variable of $(u_1,u_2)$  for Schwartz $q$. Namely,
recalling that  $u_1=(\I-\S^k_{q})^{-1}[1]\in L^\infty$, by Neumann series and the previous lemma, 
we obtain
$u_1(z,k)\to 1$ as $|k|\to \infty$.  Together with an application of the Riemann--Lebesgue lemma to $(z-\cdot)^{-1}q 
\in L^1$ this also yields that $u_{2}
=\partial_{\overline{z}}^{-1}[e_{-k}q\overline{u_{1}}]\to 0$  as $|k|\to\infty$.
\end{rem}

Armed with the Lemma~\ref{isitgood}, we can now follow the proof of Lemma~{4.1} in \cite{perry} to prove that $\mathcal{F}:H^{s,s}\to L^2$ is continuous.

\begin{thm}\label{one} Let $s\in(0,1)$ and $q_1,q_2\in H^{s,s}$. Then there is a function $\,C:\mathbb{R}_+^2\to \mathbb{R}_+$, bounded on compact subsets of its domain, such that
$$
\big\|\FF [q_2]-\FF [q_1]\big\|_2\le C(s,\|q_1\|_{H^{s,s}})C(s,\|q_2\|_{H^{s,s}})\big\|q_2-q_1\big\|_{H^{s,s}}.
$$
\end{thm}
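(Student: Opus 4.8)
The plan is to control the difference $\FF[q_2]-\FF[q_1]$ by using the integral representation \eqref{kuusi}, which expresses $\FF[q](k)$ in terms of $q$ and the scattering solution $u_1(\cdot,k)=(\I-\S^k_q)^{-1}[1]$. Writing $u_1^{(j)}=(\I-\S^k_{q_j})^{-1}[1]$, I would split the difference into a term where the potential in front changes and a term where the solution changes, schematically
\begin{align*}
\FF[q_2](k)-\FF[q_1](k)=\tfrac{i}{2\pi}\int_{\R^2}e_{-k}(q_2-q_1)\,\overline{u_1^{(2)}}\,dz
+\tfrac{i}{2\pi}\int_{\R^2}e_{-k}q_1\,\overline{(u_1^{(2)}-u_1^{(1)})}\,dz.
\end{align*}
The first term is essentially a nonlinear Fourier transform of $q_2-q_1$ against a bounded solution, and the second requires estimating $u_1^{(2)}-u_1^{(1)}$ in terms of $q_2-q_1$. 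Using the resolvent identity
$$
(\I-\S^k_{q_2})^{-1}-(\I-\S^k_{q_1})^{-1}=(\I-\S^k_{q_2})^{-1}(\S^k_{q_2}-\S^k_{q_1})(\I-\S^k_{q_1})^{-1},
$$
together with the fact that $\S^k_{q}$ depends bilinearly on $q$, so that $\S^k_{q_2}-\S^k_{q_1}$ is controlled by $\|q_2-q_1\|_{H^{s,s}}(\|q_1\|_{H^{s,s}}+\|q_2\|_{H^{s,s}})$, I can reduce everything to $L^p$-bounds on $\S^k_q$ and its resolvent.

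The key input is Lemma~\ref{isitgood}, which for $p\in(2/s,\infty)$ gives the decay $\|\S^k_q[F]\|_{L^p}\le C(1+|k|)^{-s}\|q\|^2_{H^{s,s}}\|F\|_{L^p}$, together with the mapping $L^\infty\to L^p$ version. This decay in $|k|$ is exactly what is needed to obtain $L^2(dk)$-integrability of the difference $\FF[q_2]-\FF[q_1]$: after applying the representation above and the $L^p$-bounds, each factor of $\FF[q_2]-\FF[q_1](k)$ will carry a factor $(1+|k|)^{-s}$, and since $s\in(0,1)$ I expect the relevant estimate to produce a bound like $(1+|k|)^{-2s}$ after combining the resolvent and the leading term, which is square-integrable in $k\in\R^2$ precisely when $2\cdot 2s>2$, i.e. when one has enough iterates. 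Where the naive single application of the lemma is insufficient for $L^2$-integrability, I would iterate the Neumann/resolvent expansion a fixed finite number of times, picking up additional powers of $(1+|k|)^{-s}$ at each stage until the total decay exceeds the critical exponent for $L^2(\R^2,dk)$, with the finitely many remainder resolvents controlled uniformly using that $(1+|k|)^{-s}\|q\|^2_{H^{s,s}}<1$ for $|k|$ large.

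The main obstacle, and the reason the constant $C$ must be allowed to depend on $\|q_1\|_{H^{s,s}}$ and $\|q_2\|_{H^{s,s}}$ rather than being a universal Lipschitz constant, is the invertibility of $\I-\S^k_q$ for small $|k|$: the decay factor $(1+|k|)^{-s}$ does not make the operator norm small near $k=0$, so one cannot bound $(\I-\S^k_q)^{-1}$ uniformly by a geometric series there. To handle the low-frequency region $|k|\le R$, I would instead use the qualitative invertibility established in Section~2 via the Fredholm alternative, combined with a compactness/continuity argument to bound $\|(\I-\S^k_q)^{-1}\|$ uniformly over $|k|\le R$ and over $q$ in a bounded subset of $H^{s,s}$; this is the step that is delicate and nonquantitative, and it is precisely what forces the local (rather than global Lipschitz) nature of the statement. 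This mirrors the structure of Perry's Lemma~4.1, so I would follow his splitting into high- and low-frequency regimes, using Lemma~\ref{isitgood} for the former and the uniform resolvent bound for the latter, and then assemble the two contributions to obtain the stated estimate.
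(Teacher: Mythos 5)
Your overall skeleton---splitting \eqref{kuusi} into a $(q_2-q_1)$ term and a $u_1^{(2)}-u_1^{(1)}$ term, invoking the second resolvent identity, the multilinearity of $q\mapsto\S^k_q$, and the decay estimates of Lemma~\ref{isitgood}---matches the paper's treatment of what it calls the remainder term. But there is a genuine gap at the heart of your argument: you propose to obtain $L^2(dk)$-integrability purely from the decay factors $(1+|k|)^{-s}$, by ``iterating the Neumann/resolvent expansion \dots until the total decay exceeds the critical exponent.'' Iteration cannot do this. Writing $(\I-\S^k_q)^{-1}=\sum_{n=0}^{N-1}(\S^k_q)^n+(\S^k_q)^N(\I-\S^k_q)^{-1}$ as in \eqref{expansion}, taking $N$ large improves only the remainder, which acquires decay $(1+|k|)^{-Ns}$; the explicit low-order terms persist, and the $n$-th of them contributes to $\FF[q]$ a quantity bounded, via H\"older and Lemma~\ref{isitgood}, only by $C(1+|k|)^{-ns}$, which is not square-integrable on $\R^2$ unless $ns>1$. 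In particular the very first nonlinear term, $\FF_1[q](k)=\int e_{-k}q\,\overline{\S^k_q[1]}\,dz$, carries decay $(1+|k|)^{-s}$ with $s<1$, and no amount of further expansion touches it. Pointwise decay in $k$ can never control these terms: they require an honest $L^2(dk)$ estimate exploiting the oscillation of $e_{-k}$ inside the iterated integrals. This is exactly where the paper invokes Perry's Lemma~4.6, i.e.\ the multilinear estimates of Brown~\cite{brown}, which bound $\|\FF_n[q_2]-\FF_n[q_1]\|_{L^2(dk)}$ for $n=1,\dots,N-1$ using only $q\in L^2$, by writing out the iterated operators as multilinear oscillatory integrals. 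Your proposal contains no substitute for this ingredient, and without it the argument cannot close.

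A secondary point: you identify the small-$|k|$ invertibility of $\I-\S^k_q$ as ``the main obstacle'' and as the source of the local (rather than global) Lipschitz character. In the paper this issue is comparatively minor: the inverse is used only as a bounded operator on $L^\infty$, coming from the Fredholm theory of Section~2, and the local nature of the estimate is due just as much to the high powers $\|q\|_{H^{s,s}}^{2N}$, with $N>2/s^2$, produced by the remainder bound \eqref{this}. So your diagnosis of where the difficulty lies is misplaced, although your proposed handling of the resolvent (Neumann series for large $|k|$, Fredholm plus compactness on $|k|\le R$) is in itself reasonable.
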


\begin{proof}  As we have Lipschitz regularity for the linear Fourier transform, by the triangle inequality we can replace $u_1$ with $u_1-1$ in \eqref{kuusi}. Using the expansion
\begin{equation}\label{expansion}
(\I-\S^k_{q})^{-1}=\sum_{n=0}^{N-1} (\S^k_{q})^n+(\S^k_{q})^{N}(\I-\S^k_{q})^{-1},
\end{equation}
we write $u_1-1=\sum_{n=1}^{N} u_q^n$, where $u_q^n(\cdot,k)=(\S^k_{q})^n[1]$ for $1\le n\le N-1$, and the remainder term $u_q^{N}$ is defined by
$$u_q^{N}(\cdot,k)=(\S^k_{q})^{N}(\I-\S^k_{q})^{-1}[1].$$
By the triangle inequality, it will suffice to prove the lemma with $\FF [q_2]-\FF [q_1]$ replaced by $\FF_n [q_2]-\FF_n [q_1]$ defined by
$$
\FF_n[q](k)=\int_{\R^2} e_{-k}(z)\,q(z)\,  \overline{u_q^n(z,k)}\, dz. 
$$
For $n=1,\ldots,N-1$, this was proven by Perry \cite[Lemma 4.6]{perry} by explicitly writing out the iterated operators, and applying the multilinear estimate of Brown~\cite{brown} (see also the deep $L^p$--versions in \cite{B0,B1}). These terms can be bounded with $q\in L^2$.
Thus, we are left to deal with the remainder term.  

As long as $N>2/s^2$, we can use an application of the third inequality in Lemma~\ref{isitgood} followed by $N-1$ applications of the second inequality to prove 
\begin{equation}\label{this}
\|(\S^k_{q})^{N} [F]\|_{L^p}\le C(1+|k|)^{-Ns}\|q\|^{2N}_{H^{s,s}}\|F\|_\infty.
\end{equation}

Now, for local Lipschitz continuity, we write $\FF_N [q_2]-\FF_N [q_1]$ as
\begin{equation*}\label{thetwo}
\int_{\R^2} e_{-k}(z)\,q_2(z)\,  \big(\overline{u_{q_2}^N(z,k)}-\overline{u_{q_1}^N(z,k)}\big)\, dz\,+\int_{\R^2} e_{-k}(z)\,\big(q_2(z)-q_1(z)\big)\,  \overline{u_{q_1}^N(z,k)}.
\end{equation*}
By H\"older's inequality, the second integral is bounded by
\begin{align*}
\|q_2-q_1\|_{p'}\big\|(\S^k_{q_1})^{N}(\I-\S^k_{q_1})^{-1}[1]\big\|_p.
\end{align*}
Letting $2<p<\frac{2}{1-s}$, one can use H\"older's inequality again, first multiplying and dividing by a weight as in the proof of the previous lemma, in order to show that $\|q_2-q_1\|_{p'}\le C\|q_2-q_1\|_{H^{s,s}}$. Combining this with \eqref{this} we can bound the second integral by a constant multiple of
\begin{align*}
(1+|k|)^{-Ns}\|q_1\|^{2N}_{H^{s,s}}\|q_2-q_1\|_{H^{s,s}},
\end{align*}
which can be squared and integrated with respect to the $k$-variable to achieve the desired bound.

Similarly, to deal with the first integral, it will suffice to bound
$$
\big\|u_{q_2}^N(\cdot,k)-u_{q_1}^N(\cdot,k)\big\|_p=\Big\|(\S^k_{{q_2}})^{N}(\I-\S^k_{{q_2}})^{-1} [1]-(\S^k_{{q_1}})^{N}(\I-\S^k_{{q_1}})^{-1} [1]\Big\|_p.
$$
However, by the triangle inequality, this is bounded by the sum of
\begin{equation}\label{fd}
\Big\|\Big((\S^k_{{q_2}})^{N} -(\S^k_{{q_1}})^{N}\Big)(\I-\S^k_{{q_2}})^{-1}[1]\Big\|_p
\end{equation}
and
\begin{align}\label{sd}
&\Big\|(\S^k_{{q_1}})^{N}\Big[(\I-\S^k_{{q_2}})^{-1}-(\I-\S^k_{{q_1}})^{-1}\Big] [1]\Big\|_p\\\nonumber
=\,\,&\Big\|\Big(\S^k_{{q_2}}-\S^k_{{q_1}}\Big)(\S^k_{{q_1}})^{N}(\I-\S^k_{{q_2}})^{-1}(\I-\S^k_{{q_1}})^{-1} [1]\Big\|_p,
\end{align}
where in the final equality we have used the second resolvent identity. Now, for a fixed $F$ the terms $(\S^k_{{q}})^{N}  [F]$ and $\S^k_{q}[F]$ can be written as multilinear operators in $q$. Then,  writing $B_1[q,q,F]=\S^k_{{q}}[F]$, from the proof of the first estimate in Lemma~\ref{isitgood} we also have that
\begin{equation}\label{first}
\|B_1[q_1,q_2,F]\|_{L^p}\le C(1+|k|)^{-s}\|q_1\|_{H^{s,s}}\|q_2\|_{H^{s,s}}\|F\|_{L^{p}}
\end{equation}
and similarly, by writing $B_N[q,\ldots,q,F]=(\S^k_{{q}})^N[F]$ and adapting the proof of \eqref{this} we obtain
\begin{equation}\label{second}
\|B_N[q_1,\ldots,q_{2N},F]\|_{L^p}\le C(1+|k|)^{-Ns}\|q_1\|_{H^{s,s}}\ldots\|q_{2N}\|_{H^{s,s}}\|F\|_\infty.
\end{equation}
Writing the difference terms in the form
\begin{multline*}
B_2[q_2,q_2,q_2,q_2,F]-B_2[q_1,q_1,q_1,q_1,F]\\
=\,B_2[q_2-q_1,q_2,q_2,q_2,F]+B_2[q_1,q_2-q_1,q_2,q_2,F]
\, \\+B_2[q_1,q_1,q_2-q_1,q_2,F]+B_2[q_1,q_1,q_1,q_2-q_1,F],
\end{multline*}
the estimate for $\|\FF_N [q_2]-\FF_N [q_1]\|_p$ then follows by bounding \eqref{fd} and \eqref{sd} by combining \eqref{this}, \eqref{first}, \eqref{second} and the fact that $(\I-\S^k_{{q}})$ is bounded in $L^\infty$.
\end{proof}

\noindent{\it Added in Proof.} This final result was recently improved in \cite{BOP}, where they show that $\mathcal{F}$ is in fact locally Lipschitz continuous from $H^{s_1,s_2}$ to $H^{s_2,s_1}$ with $0<s_1,s_2<1$.

\end{document}